\documentclass[reqno]{amsart}
\usepackage{amsmath,amsthm,amscd,amssymb,amsfonts, amsbsy}
\usepackage{latexsym}
\usepackage{color}
\usepackage{enumerate}
\usepackage{mathrsfs}
\usepackage{pxfonts}
\usepackage{verbatim}

\theoremstyle{plain}
\newtheorem{theorem}[equation]{Theorem}
\newtheorem{lemma}[equation]{Lemma}

\theoremstyle{definition}

\theoremstyle{remark}

\newtheorem{acknowledgment}[equation]{Acknowledgment}

\newcommand{\dv}{\operatorname{div}}

\newcommand*{\tran}{^{\mkern-1.5mu\mathsf{T}}}

\numberwithin{equation}{section}

\newcommand{\bR}{\mathbb{R}}

\newcommand\sL{\mathscr{L}}

\providecommand{\ip}[1]{\langle#1\rangle}

\providecommand{\set}[1]{\{#1\}}

\providecommand{\abs}[1]{\lvert#1\rvert}
\providecommand{\Abs}[1]{\left\lvert#1\right\rvert}

\providecommand{\norm}[1]{\lVert#1\rVert}

\providecommand{\tri}[1]{\lvert\!\lvert\!\lvert#1\rvert\!\rvert\!\rvert}

\renewcommand{\vec}[1]{\boldsymbol{#1}}

\DeclareMathOperator*{\esssup}{ess\,sup}

\newcommand\rV{\mathring{V}}
\newcommand\rW{\mathring{W}}

\newcommand{\BMO}{\mathrm{BMO}}

\begin{document}
\title[Fundamental matrices]
{Fundamental solutions for second order parabolic systems with drift terms}

\author[H. Dong]{Hongjie Dong}
\address[H. Dong]{Division of Applied Mathematics, Brown University,
182 George Street, Providence, RI 02912, United States of America}
\email{hdong@brown.edu}
\thanks{Hongjie Dong is partially supported by the National Science Foundation under agreement DMS-1600593}

\author[S. Kim]{Seick Kim}
\address[S. Kim]{Department of Mathematics, Yonsei University, 50 Yonsei-ro, Seodaemun-gu, Seoul 03722, Republic of Korea}
\email{kimseick@yonsei.ac.kr}
\thanks{Seick Kim is partially supported by National Research Foundation of Korea under agreement NRF-2016R1D1A1B03931680.}

\subjclass[2000]{Primary 35A08, 35K40; Secondary 35B45}

\keywords{fundamental solution, second order parabolic system, Gaussian estimate.}

\begin{abstract}
We construct fundamental solutions of second-order parabolic systems of divergence form with bounded and measurable leading coefficients and divergence free first-order coefficients in the class of $\BMO^{-1}_x$,
under the assumption that weak solutions of the system satisfy a certain local boundedness estimate. We also establish Gaussian upper bounds for such fundamental solutions under the same conditions.
\end{abstract}

\maketitle

\section{Introduction} 
In this paper, we study fundamental solutions (or fundamental matrices) of second-order parabolic systems of divergence form
\[
\sum_{j=1}^m \sL_{ij}u^j:=
u^i_t- \sum_{j=1}^m \sum_{\alpha,\beta=1}^n D_{\alpha}(A^{\alpha\beta}_{ij} D_{\beta}u^j)+ \sum_{j=1}^m \sum_{\alpha=1}^n B^{\alpha}_{ij} D_{\alpha} u^j + \sum_{i,j=1}^m C_{ij} u^j,\quad i=1,\ldots, m.
\]
By using matrix notation and adopting the usual summation convention over repeated indices, we write the above system as
\begin{equation}				\label{master_eq}
\sL\vec u :=\vec u_t-D_\alpha (\vec A^{\alpha\beta}\,D_\beta \vec u)+ \vec B^\alpha D_{\alpha} \vec u + \vec C \vec u,
\end{equation}
where $\vec A^{\alpha\beta}=\vec A^{\alpha\beta}(t,x)$, $\vec B^\alpha=\vec B^\alpha(t,x)$, and $\vec C=\vec C(t,x)$ are $m \times m$ matrix valued functions defined on $\bR\times \bR^n=\bR^{n+1}$ and $\vec u=(u^1,\ldots,u^m)\tran$ is a column vector valued function on $\bR^{n+1}$.

We assume that the principal coefficients $\vec A^{\alpha\beta}$ satisfy the following parabolicity and boundedness condition: there are constants $0<\lambda, \Lambda<\infty$ such that
\begin{equation}				\label{cond_a1}
\lambda \sum_{i=1}^m \sum_{\alpha=1}^n \, \abs{\xi^i_\alpha}^2 \le  A^{\alpha\beta}_{ij} \xi^i_\alpha \xi^j_\beta,
\end{equation}
\begin{equation}				\label{cond_a2}
\sum_{i,j=1}^m \sum_{\alpha, \beta=1}^n \,\abs{A^{\alpha\beta}_{ij}}^2 \le \Lambda^2.
\end{equation}
Note that we do not impose any symmetry condition on $A^{\alpha\beta}$.
We also assume that $\vec B^\alpha$ is symmetric and divergence free and that $\vec C$ is nonnegative definite; that is
\begin{equation}				\label{cond_b1}
B^\alpha_{ij}=B^\alpha_{ji},\quad D_\alpha B^\alpha_{ij}=0,
\end{equation}
\begin{equation}				\label{cond_c}
C_{ij} \xi^i \xi^j \ge 0, \quad \forall (\xi^1,\ldots, \xi^m) \in \bR^m.
\end{equation}
Finally, we assume that $\vec B^\alpha \in L^\infty_t(\BMO^{-1}_x)$; that is there are $m\times m$ matrix valued function $\vec \Phi^{\alpha\beta}$ in $\bR^{n+1}$ and a constant $0<\Theta <\infty$ such that
\begin{equation}				\label{cond_b2}
B^\alpha_{ij}=D_\beta \Phi^{\alpha\beta}_{ij},\quad \sum_{i,j=1}^m \sum_{\alpha,\beta=1}^n \sup_{t\in \bR}\,\norm{\Phi_{ij}^{\alpha\beta}(t,\cdot)}_{\BMO(\bR^n)}^2 \le \Theta^2.
\end{equation}
The system of the form \eqref{master_eq} is relevant for applications to incompressible flows. See, for instance, \cite{Z04, SSSZ}.

By a fundamental solution for the system \eqref{master_eq}, we mean an $m\times m$ matrix valued function
$\vec \Gamma(t,x,s,y)$ ($x,y \in\bR^n$ and $t,s\in\bR$) which satisfies the following:
\begin{align*}
\sL_{t,x} \,\vec \Gamma(t,x,s,y)=0 &\quad\text{ in }\; (s,\infty) \times \bR^n, \\
\vec \Gamma (t,x,s,y)=\delta_{y}(x) \vec I &\quad\text{ on }\; \set{t=s} \times \bR^n,
\end{align*}
where $\delta_{y}(\cdot)$ is a Dirac delta function and $\vec I$ is the $m\times m$ identity matrix; see Theorem~\ref{thm1} for more precise definition.
Since $\vec B^\alpha$ is divergence free, the adjoint operator $\sL^\ast$ is given as follows:
\[
\sL^\ast \vec u :=-\vec u_t-D_\alpha ({}^\ast\vec A^{\alpha\beta} D_\beta \vec u)- \vec B^\alpha D_{\alpha} \vec u + \vec C \vec u,
\]
where ${}^\ast\vec A^{\alpha\beta}= (\vec A^{\beta\alpha})\tran$ (i.e., ${}^\ast A^{\alpha\beta}_{ij}=A^{\beta\alpha}_{ji}$).
Note that the coefficients ${}^\ast\vec A^{\alpha\beta}$ satisfy the same parabolicity and boundedness conditions \eqref{cond_a1} and \eqref{cond_a2}.

The goal of this article is to show that if $\sL$ and $\sL^\ast$ both satisfy the local boundedness property with constant $N_0$ (see Section~\ref{sec:lb} below), then there exists a fundamental solution $\vec \Gamma(t,x,s,y)$ of the system \eqref{master_eq} which satisfies the following Gaussian bound: there exist constants $C=C(n,m,\lambda, \Lambda, \Theta, N_0)$ and $\kappa=\kappa(n,m, \lambda, \Lambda, \Theta)>0$ such that for all $t, s \in \bR$ satisfying $s<t$ and $x, y \in \bR^n$, we have
\[
\abs{\vec\Gamma(t,x,s,y)} \le \frac{C}{(t-s)^{n/2}} \exp \left\{- \frac{\kappa \abs{x-y}^2}{t-s} \right\}.
\]

A few historical remarks are in order.
Fundamental solutions of parabolic equations of divergence form with bounded
measurable coefficients have been studied by many authors.
The first significant step in this direction was made in 1957 by Nash \cite{Nash},
who established certain estimates of the fundamental solutions in proving
local H\"older continuity of weak solutions.
In 1967, Aronson \cite{Aronson} proved Gaussian upper and lower bounds for the fundamental solutions by using the parabolic Harnack inequality of Moser \cite{Moser}.
In 1986, Fabes and Stroock \cite{FS} showed that the idea of Nash could be
used to establish Aronson's Gaussian bounds, which consequently gave
a new proof of Moser's parabolic Harnack inequality.
In 2008, the authors and Cho \cite{CDK08} considered parabolic systems \eqref{master_eq} without lower-order terms (i.e., $\vec B^\alpha = \vec C =0$) and constructed the fundamental solutions and obtained Gaussian upper bounds under the assumption that weak solutions to the system and its adjoint system are locally H\"older continuous.
For the fundamental solutions of parabolic equations with measurable coefficients in nondivergence form, a paper by Escauriaza \cite{Escauriaza} is notable.

In writing this article, we are very much motivated by very recent papers by Qian and Xi \cite{QianXi16, QianXi17}.
They considered parabolic equations with divergence-free drift terms and established upper and lower Gaussian bounds.
Earlier in 2012, Seregin et al. \cite{SSSZ} studied scalar parabolic equations $\partial_t u - \dv(A \nabla u)=0$ and established the Moser's Harnack inequality under the assumption that $A=a+d$, where $a$ is symmetric, $d$ is skew symmetric, and satisfies
\[
\lambda \vec I \le a \le \Lambda \vec I,\quad d \in L^\infty_t(\BMO_x).
\]
It is more or less straightforward to check that the above scalar equations are covered by the parabolic system introduced at the beginning.
In the spirit of Fabes and Stroock \cite{FS}, Moser's Harnack inequality should be equivalent to having the two-sided Gaussian bounds for the fundamental solution.
As a matter of fact, it is proved in \cite{HofKim2} that local boundedness property (which is implied by Moser's Harnack inequality) implies Gaussian bounds for the fundamental solution for parabolic systems.
However, it was not clear that the fundamental solutions for the aforementioned scalar equation considered in \cite{SSSZ} enjoy Gaussian bounds.
In \cite{QianXi16}, Qian and Xi resolved this question by using a clever inequality involving Hardy norm; see Proposition~3.2 in \cite{QianXi16}.
By adopting the inequality by Qian and Xi to the systems setting, we are able to extend the main result in \cite{CDK08} to parabolic systems with drift terms satisfying the aforementioned conditions, which are the natural extension of the conditions imposed in \cite{SSSZ, QianXi16}. 
Because of the well-known embedding $L^n\hookrightarrow \BMO^{-1}$ (see, for instance, \cite{KT2001}), our result also extends Theorem 2 of \cite{QianXi17}. We refer the reader to \cite{Z04, S06, QianXi16, QianXi17} and the references therein for other previous results in this direction.

The organization of the paper is as follows.
In Section~\ref{main}, we introduce some notation and preliminary lemmas, and then state our main result, Theorem~\ref{thm1}.
Section~\ref{sec3} is devoted to the proof of the main theorem.

\section{Preliminaries and main results} \label{main}
We use the same notation as used in \cite{CDK08}.
For reader's convenience, we reproduce the most frequently used notation here.
We refer the reader to \cite{CDK08} for more details.
\subsection{Basic Notation}
We use $X=(t,x)$ to denote a point in $\bR^{n+1}=\bR\times \bR^n$.
We define the parabolic distance between the points $X=(t,x)$ and $Y=(s,y)$ in $\bR^{n+1}$ as
\[
\abs{X-Y}_p:=\max(\sqrt{\abs{t-s}},\abs{x-y}).
\]
We use the following notation for basic cylinders in $\bR^{n+1}$:
\begin{align*}
Q^-_r(X)&=(t-r^2,t)\times B_r(x),\\
Q^+_r(X)&=(t,t+r^2)\times B_r(x),\\
Q_r(X)&=(t-r^2,t+r^2)\times B_r(x).
\end{align*}
In the rest of this subsection, we shall denote by $Q$ the cylinder $(t_0,t_1)\times\Omega$.
We denote by $W^{1,0}_2(Q)$ the Hilbert space with the inner product
\[
\ip{u,v}_{W^{1,0}_2(Q)}:=\int_Q uv+\sum_{\alpha=1}^n \int_Q D_\alpha u D_\alpha v
\]
and by $W^{1,1}_2(Q)$ the Hilbert space with the inner product
\[
\ip{u,v}_{W^{1,1}_2(Q)}:=\int_Q uv+\sum_{\alpha=1}^n \int_Q D_\alpha u D_\alpha v
+\int_Q u_t v_t.
\]
We denote by $\rW^{1,0}_2(Q)$ and $\rW^{1,1}_2(Q)$ the closure of $C^\infty_{c}([t_0,t_1]\times \Omega)$ in the Hilbert spaces $W^{1,0}_2(Q)$ and $W^{1,1}_2(Q)$, respectively.
We define $V_2(Q)$ as the Banach space consisting of all elements
of $W^{1,0}_2(Q)$ having a finite norm
\[
\norm{u}_{V_2(Q)}=\tri{u}_{Q}:= \left(\norm{D u}_{L^2(Q)}^2
+\esssup\limits_{t_0 \le t\le t_1}\,\norm{u(t,\cdot)}_{L^2(\Omega)}^2\right)^{1/2}.
\]
The space $V^{1,0}_2(Q)$ is obtained by completing the set $W^{1,1}_2(Q)$ in the norm of $V_2(Q)$.
We define $\rV_2(Q):=V_2(Q)\cap \rW^{1,0}_2(Q)$ and
$\rV^{1,0}_2(Q)=V^{1,0}_2(Q)\cap \rW^{1,0}_2(Q)$.
We recall the following well known embedding theorem (see e.g., \cite[\S II.3]{LSU}):
\begin{equation} \label{eqn:2.2}
\norm{u}_{L^{2+\frac{4}{n}}(Q)} \le C(n) \tri{u}_{Q}
\quad\forall u\in \rV_2(Q).
\end{equation}
\subsection{Energy inequality}			
Due to the assumptions \eqref{cond_b1} and \eqref{cond_c}, the following energy inequality is available for the operator $\sL$ and its adjoint $\sL^\ast$.
\begin{lemma}				
Let $Q=(t_0 ,t_1) \times \Omega$ and $\vec u \in \rV^{1,0}_2(Q)$ be a weak solution of
\[
\sL \vec u = \vec f\;\text{ in }\; Q,\quad \vec u(x,t_0)=\vec \psi(x)\;\text{ on }\;\Omega,
\]
where $\vec \psi \in L^2(\Omega)$ and $\vec f \in L^{(2n+4)/(n+4)}(Q)$.
Then $\vec u$ satisfies the energy inequality
\begin{equation}						\label{energy_ineq}
\tri{\vec u}_Q \le C \left(\norm{\vec \psi}_{L^2(\Omega)}+ \norm{\vec f}_{L^{(2n+4)/(n+4)}(Q)} \right),
\end{equation}
where $C=C(n, \lambda, \Lambda)$.
A similar statement is true for a corresponding adjoint problem.
\end{lemma}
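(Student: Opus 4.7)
The plan is to test the equation against $\vec u$ itself and exploit the structural assumptions to eliminate the drift and zero--order terms, reducing everything to a classical calculation. Because $\vec u_t$ exists only in a distributional sense for $\vec u\in\rV^{1,0}_2(Q)$, I would first regularize in time via Steklov averages (or, equivalently, approximate by solutions of smoothed problems), carry out all manipulations for the smoothed object, and then pass to the limit.

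For the smoothed solution, multiply $\sL\vec u=\vec f$ by $\vec u$ and integrate over $(t_0,t)\times\Omega$ for $t\le t_1$. The time-derivative contribution yields $\tfrac12\|\vec u(t,\cdot)\|_{L^2}^2-\tfrac12\|\vec\psi\|_{L^2}^2$; parabolicity \eqref{cond_a1} bounds the principal part below by $\lambda\int|D\vec u|^2$; and \eqref{cond_c} ensures the zero--order contribution $\int C_{ij}u^iu^j$ is nonnegative and may be discarded. The essential point is that the drift term vanishes. Using the symmetry $B^\alpha_{ij}=B^\alpha_{ji}$, one has
\[
2\int B^\alpha_{ij} D_\alpha u^j\, u^i\,dx=\int B^\alpha_{ij}\,D_\alpha(u^i u^j)\,dx,
\]
which, by the distributional representation $B^\alpha_{ij}=D_\beta\Phi^{\alpha\beta}_{ij}$ with $\vec\Phi^{\alpha\beta}$ chosen antisymmetric in $\alpha,\beta$ (such a potential always exists for a divergence-free field, e.g.\ via $\Phi^{\alpha\beta}=\Delta^{-1}(D_\beta B^\alpha-D_\alpha B^\beta)$), equals $-\int\Phi^{\alpha\beta}_{ij}\,D_\alpha D_\beta(u^i u^j)\,dx=0$ since $D_\alpha D_\beta$ is symmetric in $\alpha,\beta$. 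For rigor at the $\BMO^{-1}_x$ regularity level, I would first mollify $\vec B^\alpha$ spatially to a smooth divergence-free field $\vec B^\alpha_\varepsilon$ (mollification preserves both the symmetry in $i,j$ and the divergence-free condition), run the cancellation for the smooth coefficients, and pass $\varepsilon\to0$ using only the $L^2$-boundedness of the resulting bilinear form in $\vec u$.

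Combining the surviving terms gives
\[
\tfrac12\|\vec u(t,\cdot)\|_{L^2(\Omega)}^2+\lambda\int_{t_0}^{t}\!\!\int_\Omega|D\vec u|^2
\le\tfrac12\|\vec\psi\|_{L^2(\Omega)}^2+\int_Q|\vec f||\vec u|
\]
for a.e.\ $t\in(t_0,t_1)$. Taking the essential supremum in $t$ of the first term and adding to the space--time integral of $|D\vec u|^2$ yields $\tri{\vec u}_Q^2$ on the left. On the right, Hölder's inequality with the conjugate pair $\bigl((2n+4)/(n+4),\,2+4/n\bigr)$ followed by the parabolic embedding \eqref{eqn:2.2} gives
\[
\int_Q|\vec f||\vec u|\le\|\vec f\|_{L^{(2n+4)/(n+4)}(Q)}\|\vec u\|_{L^{2+4/n}(Q)}\le C(n)\|\vec f\|_{L^{(2n+4)/(n+4)}(Q)}\tri{\vec u}_Q.
\]

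Finally, Young's inequality absorbs $\tri{\vec u}_Q$ into the left-hand side, yielding $\tri{\vec u}_Q^2\le C(\|\vec\psi\|_{L^2(\Omega)}^2+\|\vec f\|_{L^{(2n+4)/(n+4)}(Q)}^2)$ with $C=C(n,\lambda,\Lambda)$, and extracting a square root gives \eqref{energy_ineq}. The adjoint version follows by the same argument applied to $\sL^\ast$ with time reversed; the coefficients ${}^\ast\vec A^{\alpha\beta}$ satisfy \eqref{cond_a1}--\eqref{cond_a2}, the drift in $\sL^\ast$ differs only in sign so the cancellation is identical, and $\vec C$ is unchanged. The only delicate step is justifying the drift cancellation at $\BMO^{-1}_x$ regularity, which I expect to handle by the mollification procedure described above.
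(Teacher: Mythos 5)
Your proof follows the same route as the paper's: test with $\vec u$ itself, kill the drift via the symmetry and divergence-free conditions, drop the nonnegative zero-order term, and close with the parabolic embedding \eqref{eqn:2.2} and Young's inequality. Where you differ is the mechanism for the drift cancellation. The paper uses the symmetry $B^\alpha_{ij}=B^\alpha_{ji}$ to write the drift term as $\tfrac12\int_\Omega B^\alpha_{ij}D_\alpha(u^iu^j)\,dx$ and then kills it by a \emph{single} integration by parts via $D_\alpha B^\alpha_{ij}=0$; no $\BMO^{-1}$ representation is needed, which is how the constant ends up depending only on $(n,\lambda,\Lambda)$ and not on $\Theta$. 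You instead pass to an antisymmetric potential and integrate by parts twice onto $u^iu^j$.

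Two caveats on your route. First, \eqref{cond_b2} does not say $\Phi^{\alpha\beta}_{ij}$ is antisymmetric in $\alpha,\beta$; you must construct a new one, and while your Riesz-transform formula does yield an $L^\infty_t\BMO_x$ antisymmetric potential for a divergence-free field, this is extra machinery the lemma does not require. Second, the identity $\int B^\alpha_{ij} D_\alpha(u^iu^j)=-\int\Phi^{\alpha\beta}_{ij}D_\alpha D_\beta(u^iu^j)$ that you write down needs two weak spatial derivatives of $u^iu^j$, which $\vec u\in V_2(Q)$ does not possess; Steklov averaging improves regularity only in $t$, not in $x$. Your mollification of $\vec B^\alpha$ rescues the step, but once $\vec B^\alpha_\varepsilon$ is smooth and divergence-free, a \emph{single} integration by parts $\int B^\alpha_\varepsilon D_\alpha(u^iu^j)=-\int (D_\alpha B^\alpha_\varepsilon)\,u^iu^j=0$ already finishes, so the antisymmetric potential and the second integration by parts are unnecessary. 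Your argument is correct in spirit, the smoothing step is the right way to make the formal computation rigorous, and the constant you obtain is properly $\Theta$-independent, but the antisymmetric-potential detour is avoidable and, as literally written, not quite rigorous at the $V_2$ level of regularity.
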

\begin{proof}
Note that assumption \eqref{cond_b1} implies
\[
\int_\Omega B^\alpha_{ij} D_\alpha u^j u^i \,dx = \int_\Omega \frac12 B^{\alpha}_{ij} D_\alpha (u^i u^j) \,dx =0
\]
and assumption \eqref{cond_c} implies
\[
\int_\Omega C_{ij} u^j u^i \,dx \ge 0.
\]
Then, testing the equation with $\vec u$ itself and using \eqref{eqn:2.2}, we obtain \eqref{energy_ineq} as usual.
\end{proof}

\subsection{Local boundedness property} \label{sec:lb}
We shall say that the operator $\sL$ (resp. $\sL^\ast$) satisfies the \emph{local boundedness property} for weak solutions if there exists a constant $N_0$ such that
\begin{equation}				\label{local_bdd}
\norm{\vec u}_{L^\infty(\frac12 Q)} \le N_0\left\{ \left(\fint_{Q} \abs{\vec u}^2\,dxdt\right)^{\frac12} + r^2 \norm{\vec f}_{L^\infty(Q)} \right\}
\end{equation}
whenever $\vec u \in V_2(Q)$ is a weak solution of $\sL \vec u=\vec f$ (resp. $\sL^\ast \vec u=\vec f$) in $Q=Q_r^{-}(X_0)$ (resp. $Q=Q_r^+(X_0)$) and $\frac12 Q= Q_{r/2}^{-}(X_0)$ (resp. $\frac12 Q= Q_{r/2}^{+}(X_0)$).

\subsection{Main result}
We now state our main theorems.
\begin{theorem} \label{thm1}
Let the coefficients of the operator $\sL$ satisfy the conditions \eqref{cond_a1} -- \eqref{cond_b2}.
Assume that operators $\sL$ and $\sL^\ast$ both satisfy the local boundedness property \eqref{local_bdd}.
Then, there exists a unique Green's matrix $\vec \Gamma(X,Y)=\vec\Gamma(t,x,s,y)$ on $\bR^{n+1}\times\bR^{n+1}$ which satisfies $\vec \Gamma(t,x,s,y)\equiv 0$ for $t<s$
and has the property that $\vec\Gamma(X,\cdot)$ is locally integrable in $\bR^{n+1}$ for all $X\in\bR^{n+1}$ and that for all $\vec f\in C^\infty_c(\bR^{n+1})^m$, the function $\vec u$ given by
\[
\vec u(X):=\int_{\bR^{n+1}} \vec \Gamma(X,Y)\vec f(Y)\,dY
\]
is a weak solution in $\rV^{1,0}_2(\bR^{n+1})^m$ of $\sL\vec u=\vec f$.
Also, for all $\vec g\in L^2(\bR^n)^m$, the function $\vec u(t,x)$ given by
\[
\vec u(t,x):=\int_{\bR^n} \vec \Gamma(t,x,s,y)\vec g(y)\,dy
\]
is the unique weak solution in $\rV^{1,0}_2((s,\infty)\times\bR^n)^m$ of the Cauchy problem
\[
\left\{\begin{array}{l l}
\sL \vec u=0\\
\vec u(s,\cdot)= \vec g.\end{array}\right.
\]
Moreover, we have for all $t>s$ and $x,y\in\bR^n$,
\begin{equation}				\label{gaussian}
\abs{\vec\Gamma(t,x,s,y)} \le \frac{C}{(t-s)^{\frac{n}{2}}} \exp \left\{- \frac{\kappa \abs{x-y}^2}{t-s} \right\},
\end{equation}
where $C=C(n,m,\lambda,\Lambda, \Theta, N_0)$ and $\kappa=\kappa(n, m, \lambda,\Lambda, \Theta)>0$ are constants.
\end{theorem}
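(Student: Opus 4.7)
The plan is to adapt the construction from \cite{CDK08} to the presence of the divergence-free drift $\vec B^\alpha$, with the key new ingredient being the Hardy space / $\BMO$ duality inequality of Qian--Xi \cite[Prop.~3.2]{QianXi16} used to absorb the drift in a weighted energy argument. First I would set up approximating Green's matrices: for each $Y=(s,y)\in\bR^{n+1}$, each index $k=1,\ldots,m$, and each $\epsilon>0$, let $\vec u^{k,\epsilon}$ be the weak solution in $\rV^{1,0}_2((s,\infty)\times\bR^n)^m$ of $\sL\vec u=0$ with initial datum $\eta_\epsilon(\cdot-y)\vec e_k$, where $\eta_\epsilon$ is a standard mollifier and $\vec e_k$ is the $k$-th coordinate vector. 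Existence and uniqueness follow from \eqref{energy_ineq}, and assembling the columns $\vec u^{k,\epsilon}$ defines $\vec\Gamma_\epsilon(X,Y)$. All subsequent estimates are to be shown uniformly in $\epsilon$.

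The heart of the argument is a weighted $L^2$ estimate of Davies--Aronson type. For a smooth $\phi:\bR^n\to\bR$ with $\abs{\nabla\phi}\le\gamma$, I would test $\sL\vec u^{k,\epsilon}=0$ against $e^{2\phi}\vec u^{k,\epsilon}$: the principal part produces a coercive term controlled by $\lambda,\Lambda,\gamma^2$ and $\vec C$ contributes nonnegatively by \eqref{cond_c}. Using \eqref{cond_b2} to write $\vec B^\alpha=D_\beta\vec\Phi^{\alpha\beta}$ (with $\vec\Phi$ chosen antisymmetric in $\alpha,\beta$, possible because $D_\alpha\vec B^\alpha=0$) and integrating by parts, the drift contribution reduces to $\vec\Phi$ paired against a Jacobian-type combination of $D\vec u^{k,\epsilon}$ and $D(e^{2\phi}\vec u^{k,\epsilon})$. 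This is exactly the structure to which the $\mathcal{H}^1$--$\BMO$ inequality of \cite[Prop.~3.2]{QianXi16} applies, producing a bound proportional to $\Theta$ times the coercive energy. After absorbing this into the principal part and running Gronwall in $t$, one obtains
\[
\int_{\bR^n} e^{2\phi(x)}\abs{\vec u^{k,\epsilon}(t,x)}^2\,dx \le e^{c_1(\gamma^2+1)(t-s)}\int_{\bR^n} e^{2\phi(x)}\abs{\eta_\epsilon(x-y)}^2\,dx.
\]
Choosing $\phi(x)=\gamma\,\omega\cdot(x-y)$ for $\omega\in S^{n-1}$ (suitably truncated so that $\phi$ is bounded) and optimizing $\gamma$ yields an off-diagonal Gaussian $L^2$ decay of the form $\int_{\abs{z-y}\ge R}\abs{\vec u^{k,\epsilon}(t,z)}^2\,dz\le C\abs{B_\epsilon}^{-1}e^{-\kappa R^2/(t-s)}$, uniform in $\epsilon$.

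Next I would convert the weighted $L^2$ bound into the pointwise bound \eqref{gaussian} using the local boundedness hypothesis. Applying \eqref{local_bdd} on a parabolic cylinder of radius $\sqrt{t-s}/4$ centered at $X=(t,x)$ yields a first factor $(t-s)^{-n/4}$ paired with an off-diagonal $L^2$ quantity; recognizing that $\vec\Gamma_\epsilon(X,\cdot)$ solves the adjoint equation $\sL^\ast$ in the $Y$ variable (again by \eqref{cond_b1}) and invoking the analogous local boundedness of $\sL^\ast$ about $Y$ produces the second factor $(t-s)^{-n/4}$, combining to the full $(t-s)^{-n/2}$ decay of \eqref{gaussian} for $\vec\Gamma_\epsilon$. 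Passage to the limit $\epsilon\to0$ by weak compactness and a diagonal extraction, identification of the limit via the integral representations using smooth test data, and uniqueness from \eqref{energy_ineq} applied to differences complete the construction.

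The main obstacle is the weighted energy step. Because the drift lives only in $L^\infty_t(\BMO^{-1}_x)$, the cross term from integration by parts cannot be tamed by Cauchy--Schwarz together with classical $L^p$ Sobolev embedding; the proof must expose the precise antisymmetric / Jacobian algebraic structure of this term so that Qian--Xi's Hardy space inequality, rather than plain Cauchy--Schwarz, can be invoked, and the resulting $\Theta$-dependent bound can be absorbed into the coercive energy with constants depending only on $\lambda,\Lambda,\Theta$. Transcribing this scalar calculation to the matrix-valued systems setting, with care for the row/column indices $i,j$ decoupled from the spatial indices $\alpha,\beta$, is where the main technical effort lies.
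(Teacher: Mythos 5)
Your plan follows the same architecture as the paper's proof: approximate fundamental solutions, a Davies-type weighted energy estimate where the drift term is handled by integrating by parts against $\vec\Phi$ and invoking the Qian--Xi $\mathcal{H}^1$--$\BMO$ product inequality, then local boundedness to upgrade $L^2$ to $L^\infty$ and a duality pass using the adjoint to finish the $(t-s)^{-n/2}$ decay. The paper approximates by solving $\sL\vec v_\epsilon = |Q^-_\epsilon|^{-1}1_{Q^-_\epsilon(Y)}\vec e_k$ rather than by mollified initial data; both are standard and equivalent. Your route to the full Gaussian exponent---local boundedness applied once in $X$ and once in $Y$ via $\sL^\ast$---is the same mechanism the paper invokes when citing the $L^1\to L^\infty$ upgrade from \cite[\S5.1]{CDK08}. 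You also correctly identify the key new ingredient (the Hardy-space inequality \eqref{eq0206s}) and the place it enters.

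Two points deserve a flag. First, your claimed Gronwall exponent $e^{c_1(\gamma^2+1)(t-s)}$ has a spurious $+1$: a constant summand in the exponent does not vanish as $\gamma\to 0$ and would leave an $e^{C(t-s)}$ factor incompatible with the pure Gaussian bound \eqref{gaussian}. What the energy inequality actually produces (cf.\ \eqref{diff_ineq}) is $e^{(\nu\gamma^2+\mu\delta)(t-s)}$ with $\delta=\|D^2\psi\|_\infty$; one must then build the truncated linear weight $\psi$ so that $\delta\lesssim\gamma/\abs{x-y}$, which makes the $\delta$-term turn into a harmless constant after the optimization $\gamma\sim\abs{x-y}/(t-s)$. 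This truncation choice is essential and needs to be spelled out. Second, and more substantively, you compress the construction of $\vec\Gamma$ to ``weak compactness and diagonal extraction.'' Uniform $L^2$ bounds from the weighted estimate are not enough; the paper needs uniform $V_2$-energy bounds for $\vec\Gamma^\epsilon(\cdot,Y)$ away from the singularity, obtained by a Caccioppoli estimate in which the drift term is again absorbed via \eqref{eq0206s}, followed by the iteration lemma of \cite[Lemma~5.1]{Giaq93} (yielding \eqref{eq11.29sun} and \eqref{eq21.11tu}). Without this step the extraction and the identification of the limit as a genuine Green's matrix do not go through. Minor: the requirement that $\vec\Phi^{\alpha\beta}$ be antisymmetric in $\alpha,\beta$ is not needed; the paper assumes only \eqref{cond_b2} and the Hardy--$\BMO$ pairing \eqref{eq11.49tu} works without that normalization.
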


\section{Proof of Theorem \ref{thm1}}				\label{sec3}
\subsection{Averaged fundamental solution}  
We closely follow the steps used in \cite{CDK08} with appropriate modification.
Let $Y=(s,y)\in \bR^{n+1}$ and $1\le k \le m$ be fixed.
For each $\epsilon>0$, fix $s_0\in (-\infty,s-\epsilon^2)$ and consider the problem
\[
\left\{\begin{array}{l l}
\sL \vec u=\frac{1}{\abs{Q^{-}_\epsilon}}1_{Q^-_\epsilon(Y)} \vec e_k\\
\vec u(s_0,\cdot)= 0,\end{array}\right.
\]
where $\vec e_k$ is the $k$-th unit vector.
By using the energy inequality \eqref{energy_ineq} and following \cite[Chapter III]{LSU}, we find that the above problem has a unique weak solution $\vec v_\epsilon=\vec v_{\epsilon;Y, k}$ in $\rV^{1,0}_2((s_0,\infty)\times\bR^n)$.
Moreover, by the uniqueness, we find that $\vec v_\epsilon$ does not depend
on the particular choice of $s_0$ and we may extend $\vec v_\epsilon$ to
the entire $\bR^{n+1}$ by setting
\[
\vec v_\epsilon\equiv 0\quad\text{on}\quad(-\infty,s-\epsilon^2)\times \bR^n.
\]
Then, by \eqref{energy_ineq} we have
\begin{equation}					\label{eq17.59tu}
\tri{\vec v_\epsilon}_{\bR^{n+1}}\le C\abs{Q_\epsilon^-(Y)}^{-\frac{n}{2n+4}} \le C \epsilon^{-\frac{n}{2}}.
\end{equation}
Next, for each $\vec f\in C^\infty_c(\bR^{n+1})^m$,
let us fix $t_0$ such that $\vec{f}\equiv 0$ on $[t_0,\infty)\times \bR^n$.
We consider the backward problem
\[
\left\{\begin{array}{l l}
\sL^\ast \vec u=\vec f\\
\vec u(t_0,\cdot)= 0.
\end{array}\right.
\]
Again, we obtain a unique weak solution $\vec u$ in $\rV^{1,0}_2((-\infty,t_0)\times\bR^n)$ and we may extend $\vec u$ to the entire $\bR^{n+1}$ by setting
$\vec u\equiv 0$ on $(t_0,\infty)\times\bR^n$.
Then, by the energy inequality \eqref{energy_ineq}, we have
\begin{equation}					\label{eq2.00}
\tri{\vec u}_{\bR^{n+1}} \le C \norm{\vec f}_{L^{2(n+2)/(n+4)}(\bR^{n+1})}
\end{equation}
and similar to \cite[Lemma~3.1]{CDK08}, we have
\begin{equation}				\label{eq2.17}
\int_{\bR^{n+1}} \vec v_\epsilon\cdot\vec f =\fint_{Q^-_\epsilon(Y)} u^k.
\end{equation}

Now, we assume that $\vec f$ is supported in $Q^+_R(X_0)$.
By the local boundedness property \eqref{local_bdd} combined with \eqref{eq2.00} and \eqref{eqn:2.2}, we have
\begin{equation} \label{eq2.19}
\norm{\vec u}_{L^\infty(Q_{R/2}^+(X_0))} \le C R^{2} \norm{\vec f}_{L^\infty(Q_R^+(X_0))}.
\end{equation}
If $Q^-_\epsilon(Y)\subset Q^+_{R/2}(X_0)$, then \eqref{eq2.17} together with
\eqref{eq2.19} yields
\[
\Abs{\int_{Q^+_R(X_0)}\vec v_\epsilon \cdot \vec f \,} \le \fint_{Q^-_\epsilon(Y)}\abs{\vec u}\le CR^{2} \norm{\vec f}_{L^{\infty}(Q^+_R(X_0))}.
\]
By duality, it follows that if $Q^-_\epsilon(Y)\subset Q^+_{R/2}(X_0)$, then
\begin{equation} \label{eq11.09}
\norm{\vec v_\epsilon}_{L^1(Q^+_R(X_0))}\le CR^{2}.
\end{equation}

Finally, we define the \textit{averaged fundamental solution} $\vec \Gamma^\epsilon(\cdot,Y)$ for $\sL$ by setting
\[
\Gamma^\epsilon_{jk}(\cdot,Y)=v_\epsilon^j=v^j_{\epsilon;Y,k}.
\]

\begin{lemma} \label{lem3.07}
Let $X=(t,x)$, $Y=(s,y)$, and assume $X\neq Y$.
Then
\begin{equation} \label{eq11.16}
\abs{\vec \Gamma^\epsilon(X,Y)}\le C \abs{X-Y}_p^{-n}, \quad \forall  \epsilon\le \tfrac13 \abs{X-Y}_p,
\end{equation}
where $C=C(n,m,\lambda, \Lambda, \Theta_0, N_0)$.
\end{lemma}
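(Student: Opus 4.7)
The plan is to exploit that, away from the tiny source cylinder $Q^-_\epsilon(Y)$, the function $\vec v_\epsilon$ solves the homogeneous system $\sL\vec v_\epsilon=0$, and then to combine the local boundedness property \eqref{local_bdd} with the global $L^1$ bound \eqref{eq11.09}. Set $d:=\abs{X-Y}_p>0$ and $r:=d/3$. The sub-case $t<s-\epsilon^2$ is trivial since $\vec v_\epsilon(X)=0$ there. In all remaining cases, the hypothesis $\epsilon\le d/3$ together with the parabolic triangle inequality gives $r+\epsilon\le 2d/3<d$, so the cylinders $Q^-_r(X)$ and $Q^-_\epsilon(Y)$ are disjoint; consequently $\sL\vec v_\epsilon=0$ weakly on $Q^-_r(X)$.

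Next, I would upgrade \eqref{local_bdd} from an $L^2$-mean to an $L^1$-mean local boundedness inequality for homogeneous solutions. This is a standard Young's-inequality iteration over geometrically nested sub-cylinders, yielding
\[
\esssup_{Q^-_{r/2}(X)}\abs{\vec v_\epsilon}\,\le\,C\fint_{Q^-_r(X)}\abs{\vec v_\epsilon}.
\]
Applying the same estimate on a slightly forward-shifted copy centered near $X$ (permitted because $\sL\vec v_\epsilon=0$ past time $t$ as well) and sending the shift to zero converts the essential supremum into the pointwise bound $\abs{\vec v_\epsilon(X)}\le C\fint_{Q^-_r(X)}\abs{\vec v_\epsilon}$.

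Then I would invoke \eqref{eq11.09} on a parabolic cylinder $Q^+_R(X_0)$ of scale $R\sim d$ that simultaneously contains $Q^-_r(X)$ and has its half-scale version $Q^+_{R/2}(X_0)$ containing the source cylinder $Q^-_\epsilon(Y)$. A concrete choice that works uniformly is $X_0:=(\min(t,s)-2d^2,\,y)$ and $R:=10d$; a short check using $\epsilon\le d/3$ and $\max(\sqrt{\abs{t-s}},\abs{x-y})=d$ verifies both inclusions. Then \eqref{eq11.09} gives $\norm{\vec v_\epsilon}_{L^1(Q^+_R(X_0))}\le CR^2\le Cd^2$, and combining with the previous step together with $\abs{Q^-_r(X)}\sim r^{n+2}\sim d^{n+2}$ yields $\abs{\vec v_\epsilon(X)}\le Cd^{-n}$, which is \eqref{eq11.16}.

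The main obstacle, beyond the routine but slightly fussy iteration from $L^2$- to $L^1$-mean local boundedness, is the geometric bookkeeping in the last step: a single scale $R\sim d$ and base point $X_0$ must be chosen so that both inclusions hold across all relative positions of $X$ and $Y$ (space-dominated vs.\ time-dominated distance, and $t$ before, inside, or after the source time interval). The margin left by $\epsilon\le d/3$ is just enough to make a single choice like the one above work in every case. The pointwise meaning of $\vec\Gamma^\epsilon(X,Y)$ at the particular point $X$ is a minor separate issue, handled either by a continuous representative or by reading \eqref{eq11.16} in the essential-supremum sense on a small neighbourhood of $X$ disjoint from the source.
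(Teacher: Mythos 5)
Your proposal is correct and follows essentially the same route as the paper: you isolate the trivial case $t<s-\epsilon^2$, observe that $\vec v_\epsilon$ solves the homogeneous system on $Q^-_r(X)$ with $r=d/3$ because $\epsilon\le d/3$ forces disjointness from the source cylinder, upgrade \eqref{local_bdd} to an $L^1$-mean bound by the standard iteration (the paper cites Giaquinta, pp.~80--82, for exactly this step), and then feed in the global $L^1$ bound \eqref{eq11.09} on a cylinder of scale $\sim d$ containing both $Q^-_r(X)$ and (at half scale) $Q^-_\epsilon(Y)$. The only cosmetic difference is the choice of reference point and scale: the paper takes $X_0=(s-4d^2,y)$ and $R=20d/3$, while you take $X_0=(\min(t,s)-2d^2,y)$ and $R=10d$; both choices verify the required inclusions.
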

\begin{proof}
Denote $d=\abs{X-Y}_p$ and let $X_0=(s-4d^2,y)$, $r=d/3$, and $R=20r$.
It is easy to see that
\[
Q^{-}_\epsilon(Y)\subset Q^{+}_{R/2}(X_0),\quad Q^{-}_r(X)\subset Q^{+}_{R}(X_0),
\quad Q^{-}_\epsilon(Y)\cap Q^{-}_r(X)=\emptyset.
\]
Since $\vec v_\epsilon=\vec v_{\epsilon; Y, k}$ is a weak solution of $\sL \vec u=0$ in $Q^-_r(X)$, by the local boundedness property \eqref{local_bdd} and the standard argument (see \cite[pp. 80--82]{Giaq93}), we have
\[
\abs{\vec v_\epsilon(X)} \le C N_0 r^{-(n+2)}\norm{\vec v_\epsilon}_{L^1(Q_r^{-}(X))}.
\]
Therefore, by \eqref{eq11.09}, we have $\abs{\vec v_\epsilon(X)}\le Cr^{-n}$, which implies \eqref{eq11.16}.
\end{proof}

\subsection{Construction of the fundamental matrix}	
Recall that $\vec v_\epsilon \in \rV^{1,0}_2(\bR^{n+1})$ satisfies
\begin{equation}				\label{eq1858sat}
\sL \vec v_\epsilon=\frac{1}{\abs{Q^{-}_\epsilon}}1_{Q^-_\epsilon(Y)} \vec e_k.
\end{equation}
For $\epsilon < \rho < R<\infty$, let $\eta: \bR^{n+1} \to \bR$ be a smooth nonnegative function such that
\begin{equation}				\label{eq_eta}
\eta \equiv 0\;\text{ on }\; Q_{\rho}(Y),\quad \eta \equiv 1\; \text{ on }\;Q_{R}(Y)^c,\quad \abs{D\eta}^2 + \abs{D^2\eta}+ \abs{\eta_t}\le \tfrac{12}{(R-\rho)^2}.
\end{equation}
By testing \eqref{eq1858sat} with $\eta^2 \vec v_\rho$ and using assumption \eqref{cond_b1}, we have
\begin{multline*}
0 = \int_{\bR^n} \frac12 (\eta^2\abs{\vec v_\epsilon}^2)_t -\int_{\bR^n} \eta \eta_t \abs{\vec v_\epsilon}^2+\int_{\bR^n} \eta^2 A^{\alpha\beta}_{ij} D_\beta v_\epsilon^j D_\alpha v_\epsilon^i \\
+ \int_{\bR^n} 2 \eta  A^{\alpha\beta}_{ij} D_\beta v_\epsilon^j D_\alpha \eta v_\epsilon^i -\int_{\bR^n} \eta D_\alpha \eta B^\alpha_{ij} v_\epsilon^i v_\epsilon^j + \int_{\bR^n} \eta^2 C_{ij} v_\epsilon^i v_\epsilon^j.
\end{multline*}
Then by using \eqref{cond_a1}, \eqref{cond_a2}, and \eqref{cond_c}, we get
\begin{multline*}
\int_{\bR^n} \frac12 (\eta^2\abs{\vec v_\epsilon}^2)_t + \lambda \int_{\bR^n} \eta^2 \abs{D \vec v_\epsilon}^2 \\
\le  \int_{\bR^n} \eta \abs{\eta_t}\, \abs{\vec v_\epsilon}^2 + 2\Lambda\int_{\bR^n} \eta \abs{D \vec v_\epsilon}\,\abs{D\eta}\,\abs{\vec v_\epsilon}+  \int_{\bR^n}  B^\alpha_{ij}  \eta D_\alpha \eta v_\epsilon^i v_\epsilon^j .
\end{multline*}
By using the assumption \eqref{cond_b2}, we control the last term
\begin{align}
						\nonumber
\int_{\bR^n}  B^\alpha_{ij}  \eta D_\alpha \eta v_\epsilon^i v_\epsilon^j
&= -\int_{\bR^n}  \Phi^{\alpha\beta}_{ij} D_\beta(\eta D_\alpha \eta v_\epsilon^i v_\epsilon^j) \\
						\label{eq11.49tu}
&\le  \norm{\Phi^{\alpha\beta}_{ij}}_{\BMO(\bR^n)} \norm{D_\beta(\eta v_\epsilon^i D_\alpha \eta v_\epsilon^j)}_{\mathcal{H}^1(\bR^n)},
\end{align}
where $\mathcal{H}^1(\bR^n)$ denotes the Hardy space.
We note that the same proof of \cite[Proposition~3.2]{QianXi16} yields the following interesting estimate:
\begin{equation}				\label{eq0206s}
\norm{D_\alpha(fg)}_{\mathcal{H}^1(\bR^n)} \le C(n)\left\{ \norm{D f}_{L^2(\bR^n)} \norm{g}_{L^2(\bR^n)} + \norm{f}_{L^2(\bR^n)} \norm{Dg}_{L^2(\bR^n)} \right\}.
\end{equation}
Fix a smooth function $\tilde\eta: \bR^{n+1} \to \bR_{+}$ such that
\[
0\le \tilde\eta \le 1, \quad \tilde\eta \equiv 1\;\text{ on }\; Q_{R}(Y),\quad \tilde\eta \equiv 0\; \text{ on }\;Q_{2R}(Y)^c,\quad  \abs{D\tilde\eta} \le \tfrac{2}{R}.
\]
Since $\eta v_\epsilon^i D_\alpha \eta v_\epsilon^j= \tilde\eta \eta v_\epsilon^i D_\alpha \eta v_\epsilon^j$, by using \eqref{eq0206s}, we estimate 
\begin{align*}
\norm{D_\beta(\eta v_\epsilon^i D_\alpha \eta v_\epsilon^j)}_{\mathcal{H}^1(\bR^n)}  \le C(n) & \left\{ \left(\norm{D(\tilde\eta \eta) v_\epsilon^i}_{L^2(\bR^n)} +\norm{\tilde\eta \eta D v_\epsilon^i}_{L^2(\bR^n)}\right)\norm{D_\alpha\eta v_\epsilon^j}_{L^2(\bR^n)} \right.\\
&\quad\left. +\norm{\tilde\eta \eta v_\epsilon^i}_{L^2(\bR^n)} \left(\norm{D D_\alpha\eta v_\epsilon^j}_{L^2(\bR^n)} + \norm{D_\alpha\eta D v_\epsilon^j}_{L^2(\bR^n)}\right) \right\}.
\end{align*}
Note that $\abs{D(\tilde\eta \eta)} \le \frac{4}{R-\rho}$. Therefore, we have
\begin{multline*}
\int_{\bR^n} B^\alpha_{ij} \eta D_\alpha \eta v_\epsilon^i v_\epsilon^j
\le C(n) \Theta \left\{ \frac{1}{(R-\rho)^2} \int_{Q_{2R}\setminus Q_\rho} \abs{\vec v_\epsilon}^2 \right. \\
\left. + \frac{1}{R-\rho}  \left(\int_{\bR^n} \eta^2\abs{D \vec v_\epsilon}^2\right)^{\frac12}\left(\int_{Q_R\setminus Q_\rho} \abs{\vec v_\epsilon}^2\right)^{\frac12}+ \frac{1}{R-\rho} \left(\int_{Q_{2R}\setminus Q_\rho} \abs{\vec v_\epsilon}^2\right)^{\frac12}\left(\int_{Q_{R}\setminus Q_\rho} \abs{D\vec v_\epsilon}^2\right)^{\frac12}\right\}.
\end{multline*}
Combining together and using Young's inequality, we get
\[
\frac12\int_{\bR^n}  (\eta^2\abs{\vec v_\epsilon}^2)_t + \frac{\lambda}{2} \int_{\bR^n} \eta^2 \abs{D \vec v_\epsilon}^2
\le \frac{C}{(R-\rho)^2} \int_{Q_{2R}\setminus Q_\rho} \abs{\vec v_\epsilon}^2 \\
+ \frac{\lambda}{4} \int_{Q_R \setminus Q_\rho }\abs{D\vec v_\epsilon}^2,
\]
where $C=C(n, m, \lambda, \Lambda, \Theta)$.
Then, by integrating with respect to $t$, we obtain
\begin{equation}					\label{eq0909sun}
\sup_{t \in \bR} \int_{\bR^n} \eta^2\abs{\vec v_\epsilon}^2 + \lambda \int_{\bR^{n+1}} \eta^2 \abs{D\vec v_\epsilon}^2
\le \frac{C}{(R-\rho)^2} \int_{Q_{2R}\setminus Q_\rho} \abs{\vec v_\epsilon}^2 + \frac{\lambda}{2} \int_{Q_R \setminus Q_\rho} \abs{D\vec v_\epsilon}^2.
\end{equation}
In particular, \eqref{eq0909sun} implies
\[
\int_{Q_R(Y)^c} \abs{D\vec v_\epsilon}^2 \le \frac{C}{(R-\rho)^2} \int_{Q_{2R}\setminus Q_\rho(Y)} \abs{\vec v_\epsilon}^2 + \frac12 \int_{Q_\rho(Y)^c} \abs{D\vec v_\epsilon}^2.
\]
Since the above inequality is true for all $\rho$ and $R$ satisfying $\epsilon < \rho <R$, a well-known iteration argument yields (see \cite[Lemma~5.1]{Giaq93}) that for any $r>\epsilon$ we have
\[
\int_{Q_{2r}(Y)^c} \abs{D\vec v_\epsilon}^2 \le C r^{-2} \int_{Q_{4r}(Y)\setminus Q_r(Y)} \abs{\vec v_\epsilon}^2.
\]
Then, by setting $\rho=2r$ and $R=4r$ in \eqref{eq_eta}, we get from \eqref{eq0909sun} that
\[
\tri{\vec v_\epsilon}_{\bR^{n+1}\setminus Q_{4r}(Y)}^2 \le  Cr^{-2} \int_{Q_{8r}(Y)\setminus Q_{2r}(Y)} \abs{\vec v_\epsilon}^2 + C \int_{Q_{2r}(Y)^c} \abs{D \vec v_\epsilon}^2 \le Cr^{-2} \int_{Q_{8r}(Y)\setminus Q_{r}(Y)} \abs{\vec v_\epsilon}^2.
\]
Therefore, by Lemma~\ref{lem3.07}, we see that if $r \ge 3\epsilon$, then
\[
\tri{\vec v_\epsilon}_{\bR^{n+1}\setminus Q_{4r}(Y)}^2
\le C r^{-2} \int_{\set{ r<\abs{X-Y}_p<8r}} \abs{X-Y}_p^{-2n}\,dX  \le Cr^{-n}.
\]
We have thus shown that if $R \ge 12\epsilon$, then we have
\[
\tri{\vec v_\epsilon}_{\bR^{n+1}\setminus Q_{R}(Y)} \le C R^{-\frac{n}{2}}.
\]
On the other hand, if $R < 12\epsilon$, then by \eqref{eq17.59tu}, we have
\[
\tri{\vec v_\epsilon}_{\bR^{n+1}\setminus Q_{R}(Y)} \le \tri{\vec v_\epsilon}_{\bR^{n+1}} \le C \epsilon^{-\frac{n}{2}} \le  C R^{-\frac{n}{2}}.
\]
Therefore, we have
\begin{equation}			\label{eq11.29sun}
\tri{\vec \Gamma^\epsilon(\cdot, Y)}_{\bR^{n+1}\setminus Q_{R}(Y)} \le C R^{-\frac{n}{2}},\quad \forall \epsilon >0.
\end{equation}
In fact, by the same reasoning, we also get
\begin{equation}			\label{eq21.11tu}
\tri{\eta \vec \Gamma^\epsilon(\cdot, Y)}_{\bR^{n+1}} \le CR^{-\frac{n}{2}},\quad \forall \epsilon >0,
\end{equation}
where $\eta$ satisfies \eqref{eq_eta} with $\rho=\frac12 R$.

With the above two estimates \eqref{eq11.29sun} and \eqref{eq21.11tu} at hand, we repeat the same arguments in \cite{CDK08} and construct the fundamental solution $\vec \Gamma(X, Y)$.
By following the same proof of \cite[Theorem~2.7]{CDK08}, it is routine to verify that $\vec \Gamma(X, Y)$ satisfies all the properties stated in the theorem except the Gaussian bound \eqref{gaussian}.
\subsection{Proof of the Gaussian bound \eqref{gaussian}}
We again modify the argument in \cite{CDK08}, which is an adaptation of a method by E. B. Davies \cite{Davies}.
Let $\psi :\bR^n \to \bR $ be a bounded $C^2$ function  satisfying
\begin{equation}			\label{eq_psi}
\abs{D \psi} \le \gamma,\quad \abs{D^2 \psi} \le \delta,
\end{equation}
where $\gamma>0$ and $\delta \ge 0$ are constants to be chosen later.
For $t>s$, we define an operator $P^\psi_{s\to t}$ on $L^2(\bR^n)^m$ as follows.
For a given $\vec f\in L^2(\bR^n)^m$, let $\vec u$ be the weak solution
in $\rV^{1,0}_2((s,\infty)\times\bR^n)^N$ of the problem
\[
\left\{\begin{array}{l l}
\sL \vec u=0\\
\vec u(s,\cdot)= e^{-\psi}\vec f.\end{array}\right.
\]
Then, we define $P^\psi_{s\to t}\vec f(x):= e^{\psi(x)}\vec u(t,x)$ so that we have
\begin{equation}  \label{eq3.60.3}
P^\psi_{s\to t}\vec f(x)= e^{\psi(x)}\int_{\bR^n} \vec \Gamma (t,x,s,y)e^{-\psi(y)}\vec f(y)\,dy.
\end{equation}
We denote
\[
I(t):=\int_{\bR^n} e^{2\psi} \abs{\vec u(t,x)}^2\,dx, \quad t\ge s.
\]
Then, by \eqref{cond_b1} and \eqref{cond_c}, we have
\begin{align*}
I'(t)&= 2 \int_{\bR^n} e^{2\psi} \vec u \cdot \vec u_t 
=-2 \int_{\bR^n} \left\{ A^{\alpha\beta}_{ij}\,D_\beta u^j D_\alpha(e^{2\psi} u^i) + e^{2\psi} B^\alpha_{ij} D_{\alpha} u^j u^i +e^{2\psi} C_{ij} u^j u^i \right\}\\
&\le -2 \int_{\bR^n}  e^{2\psi} A^{\alpha\beta}_{ij}\,D_\beta u^j D_\alpha u^i -4 \int_{\bR^n}  e^{2\psi} A^{\alpha\beta}_{ij}\,D_\beta u^j D_\alpha \psi u^i
+ \int_{\bR^n}  D_\alpha(e^{2\psi} B^\alpha_{ij}) u^i u^j \\
&\le -2\lambda \int_{\bR^n} e^{2\psi} \abs{D \vec u}^2+ 4\Lambda \gamma \int_{\bR^n}  e^{2\psi} \abs{D \vec u} \,\abs{\vec u} +2 \int_{\bR^n}  B^\alpha_{ij} e^{2\psi} D_\alpha \psi u^i u^j.
\end{align*}
Similar to \eqref{eq11.49tu}, the assumption \eqref{cond_b2} yields
\[
\int_{\bR^n}  B^\alpha_{ij} e^{2\psi} D_\alpha \psi u^i u^j
\le  \norm{\Phi^{\alpha\beta}_{ij}}_{\BMO(\bR^n)} \norm{D_\beta(e^{2\psi} D_\alpha \psi u^i u^j)}_{\mathcal{H}^1(\bR^n)},
\]
Then by using \eqref{eq0206s}, we estimate (setting $f=e^\psi u^i$ and $g=e^\psi D_\alpha \psi u^j$)
\begin{multline*}
\norm{D_\beta(e^{2\psi} D_\alpha \psi u^i u^j)}_{\mathcal{H}^1(\bR^n)} \le C(n)  \left\{ \left(\norm{e^\psi D\psi u^i}_{L^2(\bR^n)} + \norm{e^\psi Du^i}_{L^2(\bR^n)}\right)\norm{e^\psi D_\alpha\psi u^j}_{L^2(\bR^n)} \right.\\
\left. +\norm{e^\psi u^i}_{L^2(\bR^n)} \left(\norm{e^\psi D\psi D_\alpha\psi u^j}_{L^2(\bR^n)} + \norm{e^\psi D D_\alpha\psi u^j}_{L^2(\bR^n)}+ \norm{e^\psi D_\alpha\psi Du^j}_{L^2(\bR^n)}\right) \right\}.
\end{multline*}
Therefore, by \eqref{eq_psi} and \eqref{cond_b2}, we obtain
\begin{multline*}
2 \int_{\bR^n}  B^\alpha_{ij} e^{2\psi} D_\alpha \psi u^i u^j \\
\le C_0 \Theta \left\{ (2\gamma^2 + \delta) \int_{\bR^n} e^{2\psi} \abs{u}^2
+2\gamma\left(\int_{\bR^n} e^{2\psi} \abs{D\vec u}^2 \right)^{\frac12} \left(\int_{\bR^n} e^{2\psi} \abs{\vec u}^2\right)^{\frac12}\right\}.
\end{multline*}
By combining together and using H\"older's and Young's inequalities, we get
\begin{equation}					\label{diff_ineq}
I'(t) \le \left\{ \left( 4\Lambda^2/\lambda +C_0^2 \Theta^2/\lambda+2 C_0\Theta \right)\gamma^2 + C_0 \Theta \delta\right\} \int_{\bR^n}  e^{2\psi} \abs{\vec u}^2.
\end{equation}
The differential inequality \eqref{diff_ineq} and the initial condition $I(s)=\norm{\vec{f}}^2_{L^2(\bR^n)}$ yields
\[
I(t)\le e^{( 2\nu \gamma^2 + 2\mu \delta) (t-s)}\norm{\vec{f}}_{L^2(\bR^n)}^2,
\]
where we set
\[
2\nu:=4\Lambda^2/\lambda +C_0^2 \Theta^2/\lambda+2 C_0\Theta\quad\text{and}\quad 2\mu:=C_0\Theta.
\]
Since $I(t)=\norm{P^\psi_{s\to t} \vec f}_{L^2(\bR^n)}^2$ for $t>s$, we have derived
\begin{equation} \label{eq3.70}
\norm{P^\psi_{s\to t} \vec f}_{L^2(\bR^n)} \le e^{(\nu\gamma^2+\mu\delta)(t-s)}\norm{\vec f}_{L^2(\bR^n)}.
\end{equation}
By \eqref{local_bdd}, we estimate
\begin{align*}
e^{-2\psi(x)} \abs{P^\psi_{s\to t} \vec f(x)}^2 = \abs{\vec u(t,x)}^2 &\le \frac{C N_0^2}{(t-s)^{\frac{n+2}{2}}} \int_s^t \!\!\!\int_{B_{\sqrt{t-s}}(x)}\abs{\vec u(\tau,y)}^2 dy d\tau\\
&\le \frac{CN_0^2}{(t-s)^{\frac{n+2}{2}}}\int_s^t \!\!\!\int_{B_{\sqrt{t-s}}(x)}e^{-2\psi(y)}
\abs{P^\psi_{s\to\tau}\vec f(y)}^2 dy d\tau.
\end{align*}
Hence, by using \eqref{eq3.70} we find
\begin{align*}
\abs{P^\psi_{s\to t}\vec f(x)}^2&\le \frac{C}{(t-s)^{\frac{n+2}{2}}}
\int_s^t \!\!\!\int_{B_{\sqrt{t-s}}(x)}e^{2\psi(x)-2\psi(y)} \abs{P^\psi_{s\to\tau}\vec f(y)}^2 dyd\tau\\
&\le \frac{C}{(t-s)^{\frac{n+2}{2}}} \int_s^t \!\!\!\int_{B_{\sqrt{t-s}}(x)}e^{2\gamma\sqrt{t-s}}
\abs{P^\psi_{s\to\tau}\vec f(y)}^2 dy d\tau\\
&\le \frac{C}{(t-s)^{\frac{n+2}{2}}} \,e^{2\gamma\sqrt{t-s}} \int_s^t e^{2(\nu\gamma^2+\mu\delta)(\tau-s)}\norm{\vec f}_{L^2(\bR^n)}^2 d\tau\\
&\le \frac{C}{(t-s)^{\frac{n}{2}}} \,e^{2\gamma\sqrt{t-s}+2(\nu\gamma^2+\mu\delta) (t-s)}
\norm{\vec f}_{L^2(\bR^n)}^2.
\end{align*}
We have thus derived the $L^2\to L^\infty$ estimate
\[
\norm{P^\psi_{s\to t}\vec f}_{L^\infty(\bR^n)} \le  C (t-s)^{-\frac{n}{4}}\,e^{\gamma\sqrt{t-s}+(\nu\gamma^2+\mu \delta)(t-s)} \norm{\vec f}_{L^2(\bR^n)}.
\]
Then, by replicating the same argument as in \cite[\S5.1]{CDK08}, we have
\[
\norm{P^\psi_{s\to t}\vec f}_{L^\infty(\bR^n)} \le  C (t-s)^{-\frac{n}{2}}\,e^{\gamma\sqrt{2(t-s)}+(\nu\gamma^2+\mu \delta)(t-s)} \norm{\vec f}_{L^1(\bR^n)},\quad \forall \vec f \in C^\infty_c(\bR^n)^m.
\]
For fixed $x, y\in \bR^n$ with $x\neq y$, the above estimate and \eqref{eq3.60.3} imply, by duality,
\begin{equation}				\label{eq3.83}
e^{\psi(x)-\psi(y)}\abs{\vec \Gamma(t,x,s,y)} \le  C (t-s)^{-\frac{n}{2}}\,e^{\gamma\sqrt{2(t-s)}+(\nu\gamma^2+\mu \delta)(t-s)}.
\end{equation}
Fix a smooth function $\psi_0: \bR \to \bR$ satisfying
\[
\psi_0(r) = 0 \;\text{ for}\; r\le 0,\quad \psi_0(r) = \abs{x-y} \; \text{ for }\;r \ge \abs{x-y},\quad \abs{\psi_0'} \le 2, \quad \abs{\psi_0''}\le 4\abs{x-y}^{-1}.
\]
We define
\[
\psi(z):= \frac{\gamma}{2} \,\psi_0(\vec n\cdot (z-y)),\quad\text{where }\;\vec n=\frac{x-y}{\abs{x-y}}.
\]
It is clear that $\psi$ is a bounded function satisfying \eqref{eq_psi} with $\delta=\frac{4\gamma}{\abs{x-y}}$.
Also, we have $\psi(x)=\frac12 \gamma \abs{x-y}$ and $\psi(y)=0$.
Therefore \eqref{eq3.83} yields
\[
\abs{\vec \Gamma(t,x,s,y)} \le  C (t-s)^{-\frac{n}{2}} \exp\left\{\gamma\sqrt{2(t-s)}+\nu\gamma^2(t-s)+ \frac{4\mu\gamma(t-s)}{\abs{x-y}}-\frac{\gamma}{2}\, \abs{x-y} \right\}.
\]
Now, we choose $\gamma= \abs{x-y}/4\nu(t-s)$.
Then
\[
\abs{\vec \Gamma(t,x,s,y)} \le  C e^{\frac{\mu}{\nu}} (t-s)^{-\frac{n}{2}}\, \exp\left\{\frac{1}{\sqrt{8}\nu} \,\frac{\abs{x-y}}{\sqrt{t-s}}- \frac{1}{16\nu} \,\frac{\abs{x-y}^2}{t-s}\right\}.
\]
Since there exists a number $N$ such that
\[
e^{\frac{1}{\sqrt{8}\nu} r- \frac{1}{16\nu} \,r^2} \le N e^{-\frac{1}{32\nu}r^2},  \quad\forall r \ge 0,
\]
we obtain the Gaussian bound \eqref{gaussian} by taking $\kappa=\frac{1}{32} \nu$.
\qed

\begin{acknowledgment}
We thank Luis Escauriaza for bringing our attention to this problem and helpful discussions.
\end{acknowledgment}

\end{document}